\newtheorem{theorem}{Theorem}[section]
\newtheorem{lemma}[theorem]{Lemma}
\theoremstyle{definition}
\newtheorem{definition}[theorem]{Definition}
\newtheorem{example}[theorem]{Example}
\newtheorem{xca}[theorem]{Exercise}
\theoremstyle{remark}
\newtheorem{remark}[theorem]{Remark}
\numberwithin{equation}{section}
\newcommand{\abs}[1]{\lvert#1\rvert}
\newcommand{\blankbox}[2]{%
  \parbox{\columnwidth}{\centering
    \setlength{\fboxsep}{0pt}%
    \fbox{\raisebox{0pt}[#2]{\hspace{#1}}}%
  }%
}
\definecolor{tssteelblue}{RGB}{70,130,180}
\definecolor{tsorange}{RGB}{255,138,88}
\definecolor{ocre}{RGB}{52,177,201} 
\newcommand{\tssteelblue}[1]{{\color{tssteelblue}#1}}
\definecolor{tssteelblue}{rgb}{1,1,1}
\definecolor{rosybrown}{RGB}{188,143,143} 
\definecolor{coral}{RGB}{240,128,128} 
\definecolor{salmon}{RGB}{255,160,122} 
\definecolor{lightpink}{RGB}{255,182,193} 
\definecolor{pink}{RGB}{255,192,203} 
\definecolor{palevioletred}{RGB}{219,112,147} 
\definecolor{deeppink}{RGB}{255,20,147} 
\newcommand{\deeppink}[1]{{\color{deeppink}#1}}
\definecolor{violet}{RGB}{238,130,238} 
\definecolor{mediumorchid}{RGB}{186,85,211} 
\definecolor{darkmagenta}{RGB}{139,0,139} 
\newcommand{\darkmagenta}[1]{{\color{darkmagenta}#1}}
\definecolor{indianred}{RGB}{205,92,92} 
\definecolor{maroon}{RGB}{128,0,0} 
\definecolor{darkcyan}{RGB}{0,139,139} 
\definecolor{gold}{RGB}{255,215,0} 
\definecolor{orange}{RGB}{255,165,0} 
\definecolor{mediumblue}{RGB}{0,0,205} 
\definecolor{mediumseagreen}{RGB}{60,179,113} 
\definecolor{darkolivegreen}{RGB}{85,107,47} 
\definecolor{lightseagreen}{RGB}{32,178,170} 
\definecolor{deeppink}{rgb}{0,0,0}
\definecolor{darkmagenta}{rgb}{0,0,0}
\begin{document}

\title{On generalized Li-Yau inequalities}


\author{Li-Chang Hung}
\address{Department of Mathematics, Soochow University, Taipei, Taiwan}
\curraddr{Department of Mathematics, Soochow University, Taipei, Taiwan}
\email{lichang.hung@gmail.com }
\thanks{
}


\subjclass[2020]{Primary 58J35, 35B45; Secondary 35B65, 53C44}


\date{
}


\keywords{Li-Yau inequality, heat equation}

\begin{abstract}

We generalize the Li-Yau inequality for second derivatives and we also establish Li-Yau type inequality for fourth derivatives. Our derivation relies on the representation formula for the heat equation.



\end{abstract}

\maketitle


.

\section{Introduction}
The Li-Yau inequality asserts that if $u=u(x,t)$ is a positive solution to the heat equation, then the logarithm of $u(x,t)$ forms a supersolution to Laplace's equation, i.e.
\begin{equation}\label{eqn: Li-Yau inequality original}
\Delta\log u\ge -\frac{n}{2t},   
\end{equation}
where $n$ is the dimension of the manifold and $t>0$. The inequality \eqref{eqn: Li-Yau inequality original} is due to Li and Yau \cite{li1986parabolic}. Their derivation relies on an idea related to the parabolic maximum principle. 

Our goal here is to give a direct proof of a generalization of inequality \eqref{eqn: Li-Yau inequality original} without using the parabolic maximum principle. Our proof relies on the representation formula for the heat equation $u_t=\Delta u$ \cite{evans2022partial}
\begin{equation}\label{eqn: representation formula for the heat equation Li-Yau} 
u(x,t)=\displaystyle\frac{1}{(4\pi t)^{n/2}}\int_{\mathbb{R}^n} g(y)e^{-\frac{|x-y|^{2}}{4t}}\,dy \quad (x\in\mathbb{R}^n,\; t>0),
\end{equation}   
where the initial condition $g\in C(\mathbb{R}^n)\bigcap L^{\infty}(\mathbb{R}^n)$ is assumed to be nonnegative and does not vanish completely. To generalize inequality \eqref{eqn: Li-Yau inequality original}, we need the following two lemmas.

\begin{lemma}[Jensen inequality]\label{lem: Holder's inequality => Jensen inequality}
Let $p\ge1$ and suppose that $\int|g|=1$. Then 
\begin{equation}
\left(
\int|fg|
\right)^p\le
\int
|f|^{p}|g|.
\end{equation}  

\end{lemma}

\begin{proof}

\begin{align}
\nonumber
\int|fg|
&=\int|f||g|^{\frac{1}{p}+\frac{1}{q}}
&& \text{\tssteelblue{let $q$ satisfy $\frac{1}{p}+\frac{1}{q}=1$
}} 
\\[1ex] \notag
&=
\int|f||g|^{\frac{1}{p}}
\cdot
|g|^{\frac{1}{q}}
&& \text{\tssteelblue{$|g|^{\frac{1}{p}+\frac{1}{q}}=|g|^{\frac{1}{p}}
\cdot
|g|^{\frac{1}{q}}$
}} 
\\[1ex] \notag
&\le
\left(
\int
\left(
|f||g|^{\frac{1}{p}}
\right)^{p}
\right)^{\frac{1}{p}}
\left(
\int
\left(
|g|^{\frac{1}{q}}
\right)^{q}
\right)^{\frac{1}{q}}
&& \text{\tssteelblue{H\"older's inequality: $\int|fg|
\le
\left(\int |f|^p\right)^{1/p}
\left(\int |g|^q\right)^{1/q}$
}}
\\[1ex] \notag
&\le
\left(
\int
|f|^{p}|g|
\right)^{\frac{1}{p}}
\left(
\int
|g|
\right)^{\frac{1}{q}}
&& \text{\tssteelblue{simplify
}}
\\[1ex] \notag
&\le
\left(
\int
|f|^{p}|g|
\right)^{\frac{1}{p}}
&& \text{\tssteelblue{$\int|g|=1$
}}
\\[1ex] \notag
\implies \left(
\int|fg|
\right)^p&\le
\int
|f|^{p}|g|
&& \text{\tssteelblue{$p$-th power
}}
\end{align}

\end{proof}

Assume \eqref{eqn: representation formula for the heat equation Li-Yau} holds. Then we can find the derivatives of $u$ in the following Lemma~\ref{lem: derivatives of representation formula solution of heat equation}.

\begin{lemma}\label{lem: derivatives of representation formula solution of heat equation}
\ \
\begin{enumerate}[(a)]
  \item 
  
\begin{equation}  
\frac{u_{x_i}(x,t)}{u(x,t)}=\int_{\mathbb{R}^n}-\frac{x_{i}-y_{i}}{2t}
\frac{\frac{1}{(4\pi t)^{n/2}}e^{-\frac{|x-y|^{2}}{4t}}g(y)}{u(x,t)}
\,dy.
\end{equation}  
  
  \item 
  
\begin{equation}  
\frac{u_{x_i x_i}(x,t)}{u(x,t)}=\int_{\mathbb{R}^n}
\left(
-\frac{1}{2t}
+
\frac{(x_{i}-y_{i})^2}{4t^2}
\right)
\frac{\frac{1}{(4\pi t)^{n/2}}e^{-\frac{|x-y|^{2}}{4t}}g(y)}{u(x,t)}
\,dy.
\end{equation}    
  
  \item For $i\neq j$,
\begin{equation}  
\frac{u_{x_i x_j}(x,t)}{u(x,t)}=\int_{\mathbb{R}^n}
\frac{(x_{i}-y_{i})(x_{j}-y_{j})}{4t^2}
\frac{\frac{1}{(4\pi t)^{n/2}}e^{-\frac{|x-y|^{2}}{4t}}g(y)}{u(x,t)}
\,dy.
\end{equation}    
  \item 
  
\begin{equation}  
\frac{u_{x_i x_i x_i}(x,t)}{u(x,t)}=\int_{\mathbb{R}^n}
\frac{(x_{i}-y_{i})(6t-(x_{i}-y_{i})^2)}{8t^3}
\frac{\frac{1}{(4\pi t)^{n/2}}e^{-\frac{|x-y|^{2}}{4t}}g(y)}{u(x,t)}
\,dy.
\end{equation} 
  \item 
  
\begin{equation}  
\frac{u_{x_i x_i x_i x_i}(x,t)}{u(x,t)}=\int_{\mathbb{R}^n}
\frac{12 t^2-12 t (x_{i}-y_{i})^2+(x_{i}-y_{i})^4}{16 t^4}
\frac{\frac{1}{(4\pi t)^{n/2}}e^{-\frac{|x-y|^{2}}{4t}}g(y)}{u(x,t)}
\,dy.
\end{equation} 

  \item For $i\neq j$,
  
\begin{equation}  
\frac{u_{x_i x_i x_j x_j}(x,t)}{u(x,t)}=\int_{\mathbb{R}^n}
\frac{((x_{i}-y_{i})^2-2t)((x_{j}-y_{j})^2-2t)}{16t^4}
\frac{\frac{1}{(4\pi t)^{n/2}}e^{-\frac{|x-y|^{2}}{4t}}g(y)}{u(x,t)}
\,dy.    
\end{equation}

\end{enumerate}


\end{lemma}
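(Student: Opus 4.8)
The plan is to differentiate the representation formula \eqref{eqn: representation formula for the heat equation Li-Yau} under the integral sign, so that every statement reduces to computing a derivative of the heat kernel
\[
K(x,y,t):=\frac{1}{(4\pi t)^{n/2}}\,e^{-\frac{|x-y|^{2}}{4t}}.
\]
Writing $\xi:=x-y$, the single fact I need is the recursion: for any polynomial $P$ in $\xi$,
\[
\partial_{x_i}\bigl[P(\xi)\,K(x,y,t)\bigr]
=\Bigl[\partial_{\xi_i}P(\xi)-\tfrac{\xi_i}{2t}\,P(\xi)\Bigr]K(x,y,t),
\]
which follows from $\partial_{x_i}\xi_j=\delta_{ij}$ together with $\partial_{x_i}e^{-|\xi|^{2}/(4t)}=-\tfrac{\xi_i}{2t}\,e^{-|\xi|^{2}/(4t)}$. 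Iterating this recursion in a single index produces, at each order, a polynomial prefactor (up to normalization, a Hermite polynomial in $\xi_i$); dividing the resulting integral by $u(x,t)$ then yields the asserted formulas.

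Concretely, I would verify (a) directly and obtain (b), (d), (e) by applying the recursion repeatedly in the index $i$: starting from $P_0=1$ one finds $P_1=-\tfrac{\xi_i}{2t}$, then $P_2=-\tfrac{1}{2t}+\tfrac{\xi_i^{2}}{4t^{2}}$, then $P_3=\tfrac{\xi_i(6t-\xi_i^{2})}{8t^{3}}$, and finally $P_4=\tfrac{12t^{2}-12t\,\xi_i^{2}+\xi_i^{4}}{16t^{4}}$, matching (b), (d), (e) in turn. For the mixed statements (c) and (f) with $i\neq j$, I would use that the Gaussian factorizes as $e^{-|\xi|^{2}/(4t)}=\prod_k e^{-\xi_k^{2}/(4t)}$, so that $\partial_{x_i}$ and $\partial_{x_j}$ act on disjoint factors and the prefactors simply multiply: (c) is $P_1^{(i)}P_1^{(j)}=\tfrac{\xi_i\xi_j}{4t^{2}}$, while (f) is $P_2^{(i)}P_2^{(j)}=\tfrac{(\xi_i^{2}-2t)(\xi_j^{2}-2t)}{16t^{4}}$.

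The only genuine technical point, and the step I would treat most carefully, is justifying the differentiation under the integral sign. Since $g\in L^{\infty}(\mathbb{R}^n)$ and each prefactor $P(\xi)$ is a polynomial, every integrand is dominated — locally uniformly in $x$ and for $t$ in any compact subinterval of $(0,\infty)$ — by a constant multiple of $|P(\xi)|\,e^{-|\xi|^{2}/(4t)}$, which is integrable in $y$. The standard criterion for differentiating a parameter integral then applies at each successive order, and the hypotheses $g\ge0$, $g\not\equiv0$ ensure $u(x,t)>0$, so that division by $u$ is legitimate. Granting this, the six identities follow from the recursion and the factorization above.
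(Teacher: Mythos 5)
Your proposal is correct and follows essentially the same route as the paper: the paper's proof of parts (d)--(f) is precisely your recursion $\partial_{x_i}[P\,K]=[\partial_{\xi_i}P-\tfrac{\xi_i}{2t}P]K$ applied to the integrand of the previous derivative, and your computed prefactors $P_1,\dots,P_4$ and the products $P_1^{(i)}P_1^{(j)}$, $P_2^{(i)}P_2^{(j)}$ all match the stated formulas. Your explicit justification of differentiation under the integral sign is a welcome addition that the paper leaves implicit.
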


\begin{proof}
Straightforward calculations lead to (a), (b), and (c). 
\begin{enumerate}
  \item [(d)]
  
\begin{align}
\nonumber
&\frac{u_{x_i x_i x_i}(x,t)}{u(x,t)}
:=\frac{(u_{x_i x_i}(x,t))_{x_i}}{u(x,t)}
&& \text{\tssteelblue{Lemma~\ref{lem: derivatives of representation formula solution of heat equation} (b)
}} 
\\[1ex] \notag
&=
\int_{\mathbb{R}^n}
\left[
\frac{x_{i}-y_{i}}{2t^2}
-
\frac{x_{i}-y_{i}}{2t}
\left(
-\frac{1}{2t}
+
\frac{(x_{i}-y_{i})^2}{4t^2}
\right)
\right]
\frac{\frac{1}{(4\pi t)^{n/2}}e^{-\frac{|x-y|^{2}}{4t}}g(y)}{u(x,t)}
\,dy
&& \text{\tssteelblue{
}} 
\\[1ex] \notag
&=
\int_{\mathbb{R}^n}
\frac{6t(x_{i}-y_{i})-(x_{i}-y_{i})^3}{8t^3}
\frac{\frac{1}{(4\pi t)^{n/2}}e^{-\frac{|x-y|^{2}}{4t}}g(y)}{u(x,t)}
\,dy
&& \text{\tssteelblue{
}} 
\\[1ex] \notag
&=
\int_{\mathbb{R}^n}
\frac{(x_{i}-y_{i})(6t-(x_{i}-y_{i})^2)}{8t^3}
\frac{\frac{1}{(4\pi t)^{n/2}}e^{-\frac{|x-y|^{2}}{4t}}g(y)}{u(x,t)}
\,dy
&& \text{\tssteelblue{
}} 
\end{align}  
  
  \item [(e)]
  
\begin{align}
\nonumber
&\frac{u_{x_i x_i x_i x_i}(x,t)}{u(x,t)}
:=\frac{(u_{x_i x_i x_i}(x,t))_{x_i}}{u(x,t)}
&& \text{\tssteelblue{Lemma~\ref{lem: derivatives of representation formula solution of heat equation} (d)
}} 
\\[1ex] \notag
&=
\int_{\mathbb{R}^n}
\left[
\frac{6t-3(x_{i}-y_{i})^2}
{8t^3}
-
\left(\frac{x_{i}-y_{i}}{2t}\right)
\left(
\frac{(x_{i}-y_{i})(6t-(x_{i}-y_{i})^2)}{8t^3}
\right)
\right]
\frac{\frac{1}{(4\pi t)^{n/2}}e^{-\frac{|x-y|^{2}}{4t}}g(y)}{u(x,t)}
\,dy
&& \text{\tssteelblue{
}} 
\\[1ex] \notag
&=
\int_{\mathbb{R}^n}
\left(
\frac{12 t^2-12 t (x_{i}-y_{i})^2+(x_{i}-y_{i})^4}{16 t^4}
\right)
\frac{\frac{1}{(4\pi t)^{n/2}}e^{-\frac{|x-y|^{2}}{4t}}g(y)}{u(x,t)}
\,dy
&& \text{\tssteelblue{
}} 
\\[1ex] \notag
&=
\int_{\mathbb{R}^n}
\left(
\frac{-12 t (x_{i}-y_{i})^2+(x_{i}-y_{i})^4}{16 t^4}
\right)
\frac{\frac{1}{(4\pi t)^{n/2}}e^{-\frac{|x-y|^{2}}{4t}}g(y)}{u(x,t)}
\,dy
+\frac{3}{4}\frac{1}{t^2}
&& \text{\tssteelblue{
}}
\end{align}  
  
  \item [(f)]
  
\begin{align}
\nonumber
&\frac{u_{x_i x_i x_j}(x,t)}{u(x,t)}
:=\frac{(u_{x_i x_j}(x,t))_{x_i}}{u(x,t)}
&& \text{\tssteelblue{Lemma~\ref{lem: derivatives of representation formula solution of heat equation} (c)
}} 
\\[1ex] \notag
&=
\int_{\mathbb{R}^n}
\left(
\frac{x_{j}-y_{j}}{4t^2}
-
\frac{x_{i}-y_{i}}{2t}
\frac{(x_{i}-y_{i})(x_{j}-y_{j})}{4t^2}
\right)
\frac{\frac{1}{(4\pi t)^{n/2}}e^{-\frac{|x-y|^{2}}{4t}}g(y)}{u(x,t)}
\,dy
&& \text{\tssteelblue{
}} 
\\[1ex] \notag
&=
\int_{\mathbb{R}^n}
\frac{(x_{j}-y_{j})(2t-(x_{i}-y_{i})^2)}{8t^3}
\frac{\frac{1}{(4\pi t)^{n/2}}e^{-\frac{|x-y|^{2}}{4t}}g(y)}{u(x,t)}
\,dy
&& \text{\tssteelblue{collect $x_{j}-y_{j}$
}} 
\end{align}  
  
\begin{align}
\nonumber
&\frac{u_{x_i x_i x_j x_j}(x,t)}{u(x,t)}
:=\frac{(u_{x_i x_i x_j}(x,t))_{x_j}}{u(x,t)}
&& \text{\tssteelblue{
}} 
\\[1ex] \notag
&=
\int_{\mathbb{R}^n}
\left(
\frac{2t-(x_{i}-y_{i})^2}{8t^3}
-
\frac{x_{j}-y_{j}}{2t}
\frac{(x_{j}-y_{j})(2t-(x_{i}-y_{i})^2)}{8t^3}
\right)
\frac{\frac{1}{(4\pi t)^{n/2}}e^{-\frac{|x-y|^{2}}{4t}}g(y)}{u(x,t)}
\,dy
&& \text{\tssteelblue{
}} 
\\[1ex] \notag
&=
\int_{\mathbb{R}^n}
\frac{4t^2-2t(x_{i}-y_{i})^2-2t(x_{j}-y_{j})^2+(x_{i}-y_{i})^2(x_{j}-y_{j})^2}{16t^4}
\frac{\frac{1}{(4\pi t)^{n/2}}e^{-\frac{|x-y|^{2}}{4t}}g(y)}{u(x,t)}
\,dy
&& \text{\tssteelblue{factored
}} 
\\[1ex] \notag
&=
\int_{\mathbb{R}^n}
\frac{((x_{i}-y_{i})^2-2t)((x_{j}-y_{j})^2-2t)}{16t^4}
\frac{\frac{1}{(4\pi t)^{n/2}}e^{-\frac{|x-y|^{2}}{4t}}g(y)}{u(x,t)}
\,dy
&& \text{\tssteelblue{
}} 
\\[1ex] \notag
&\le
\int_{\mathbb{R}^n}
\frac{(x_{i}-y_{i})^2(x_{j}-y_{j})^2}{16t^4}
\frac{\frac{1}{(4\pi t)^{n/2}}e^{-\frac{|x-y|^{2}}{4t}}g(y)}{u(x,t)}
\,dy
+
\frac{1}{4t^2}
&& \text{\tssteelblue{$\int_{\mathbb{R}^n}\frac{\frac{1}{(4\pi t)^{n/2}}e^{-\frac{|x-y|^{2}}{4t}}g(y)}{u(x,t)}\,dy=1$
}}
\end{align}  
  
\end{enumerate}

\end{proof}

\section{Proof of the generalized Li-Yau inequality}

\begin{theorem}[\deeppink{Generalized Li-Yau inequality}]\label{thm: Li-Yau Type Inequality for second derivatives n dim}
Let $u=u(x,t)$ be given by \eqref{eqn: representation formula for the heat equation Li-Yau}. Then

\begin{equation}
\frac{\Delta u}{u}
-
\alpha
\sum_{i,j=1,i\neq j}^{n}
\frac{u_{x_i x_j}}{u}
-
\beta
\sum_{i,j=1,i\neq j}^{n} 
\frac{u_{x_i}u_{x_j}}{u^2}
-
\gamma
\frac{|\nabla u|^2}{u^2}
\ge-\frac{n}{2t},
\end{equation}

where $\alpha$, $\beta$, and $\gamma$ are nonnegative constants satisfying
\begin{equation}
(n-1)
\left(
\alpha+\beta
\right)
+\gamma
\le1.
\end{equation}

\end{theorem}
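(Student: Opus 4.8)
The plan is to recast every quotient on the left-hand side as an expectation against the probability measure
\[
d\mu_{x,t}(y) = \frac{\frac{1}{(4\pi t)^{n/2}} e^{-|x-y|^2/(4t)} g(y)}{u(x,t)}\, dy,
\]
which is genuinely a probability measure since $\int d\mu_{x,t} = 1$ by \eqref{eqn: representation formula for the heat equation Li-Yau}. Writing $z_i := x_i - y_i$ and letting $\mathbb{E}$ denote integration against $d\mu_{x,t}$, the formulas in Lemma~\ref{lem: derivatives of representation formula solution of heat equation}(a)--(c) read $\frac{u_{x_i}}{u} = -\frac{\mathbb{E}[z_i]}{2t}$, $\frac{u_{x_i x_i}}{u} = -\frac{1}{2t} + \frac{\mathbb{E}[z_i^2]}{4t^2}$, and $\frac{u_{x_i x_j}}{u} = \frac{\mathbb{E}[z_i z_j]}{4t^2}$ for $i\neq j$. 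Abbreviating $S := \sum_i z_i$ and introducing the four scalars $A := \mathbb{E}[|z|^2]$, $B := \mathbb{E}[S^2]$, $C := \sum_i (\mathbb{E}[z_i])^2$, and $D := (\sum_i \mathbb{E}[z_i])^2$, I would sum the above to obtain
\[
\frac{\Delta u}{u} = -\frac{n}{2t} + \frac{A}{4t^2}, \quad \sum_{i\neq j}\frac{u_{x_i x_j}}{u} = \frac{B-A}{4t^2}, \quad \sum_{i\neq j}\frac{u_{x_i}u_{x_j}}{u^2} = \frac{D-C}{4t^2}, \quad \frac{|\nabla u|^2}{u^2} = \frac{C}{4t^2},
\]
using $\sum_{i\neq j} z_i z_j = S^2 - |z|^2$ and $\sum_{i\neq j}\mathbb{E}[z_i]\mathbb{E}[z_j] = D - C$. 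After cancelling the common term $-n/(2t)$ and clearing the positive factor $1/(4t^2)$, the asserted inequality becomes the purely algebraic statement
\[
(1+\alpha)A - \alpha B + (\beta-\gamma)C - \beta D \ge 0.
\]

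The heart of the argument is then three elementary estimates among $A,B,C,D$. Pointwise Cauchy--Schwarz in $\mathbb{R}^n$ gives $S^2 \le n|z|^2$, hence $B \le nA$ after integrating; the same inequality applied to the vector $(\mathbb{E}[z_1],\dots,\mathbb{E}[z_n])$ gives $D \le nC$; and Jensen's inequality (Lemma~\ref{lem: Holder's inequality => Jensen inequality} with $p=2$) applied in each coordinate gives $(\mathbb{E}[z_i])^2 \le \mathbb{E}[z_i^2]$, hence $C \le A$. Since $\alpha,\beta \ge 0$, I would invoke $B \le nA$ and $D \le nC$ to bound
\[
(1+\alpha)A - \alpha B + (\beta-\gamma)C - \beta D \ge \bigl(1-(n-1)\alpha\bigr)A - \bigl((n-1)\beta+\gamma\bigr)C.
\]
The constraint $(n-1)(\alpha+\beta)+\gamma \le 1$ forces $(n-1)\alpha \le 1$, so the coefficient $1-(n-1)\alpha$ is nonnegative; together with $A \ge C \ge 0$ this permits replacing $A$ by $C$, collapsing the right-hand side to $\bigl(1-(n-1)(\alpha+\beta)-\gamma\bigr)C$, which is nonnegative by the constraint.

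The only real subtlety, and the step I expect to require the most care, is tracking the directions of the three inequalities so that each substitution genuinely decreases the expression: the quantities $B$ and $D$ enter with negative coefficients and must be bounded \emph{from above} (via the two Cauchy--Schwarz estimates), whereas $A$ must be bounded \emph{from below} by $C$ (via Jensen), and this final replacement is legitimate precisely because the constraint guarantees the coefficient of $A$ is nonnegative. Everything beyond this bookkeeping is routine algebra, and notably no parabolic maximum principle enters the argument.
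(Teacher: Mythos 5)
Your proposal is correct and follows essentially the same route as the paper: both recast every term as an expectation against the probability measure $d\mu_{x,t}$, compute $\Delta u/u$ exactly, and bound the three subtracted terms from above using Jensen ($(\mathbb{E}[z_i])^2\le\mathbb{E}[z_i^2]$, i.e.\ Lemma~\ref{lem: Holder's inequality => Jensen inequality} with $p=2$) together with the Cauchy--Schwarz estimate $\sum_{i\neq j}a_ia_j\le(n-1)\sum_i a_i^2$, which is exactly the paper's $ab\le\tfrac12(a^2+b^2)$ summed over off-diagonal pairs. Your reduction to the single algebraic inequality $(1+\alpha)A-\alpha B+(\beta-\gamma)C-\beta D\ge 0$ is a cleaner bookkeeping of the same estimates, and your sign-tracking (bounding $B,D$ above, $A$ below by $C$ only after checking $1-(n-1)\alpha\ge0$) is sound.
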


\begin{proof}
\ \
\begin{enumerate}
  
  \item 
  Estimate $\left|\frac{\nabla u(x,t)}{u(x,t)}\right|^2$

\begin{align}
\nonumber
&\left|\frac{\nabla u(x,t)}{u(x,t)}\right|^2
:=
\sum_{i=1}^{n} \left|\frac{u_{x_i}(x,t)}{u(x,t)}\right|^2
&& \text{\tssteelblue{definition of $\nabla$
}} 
\\[1ex] \notag
&
=
\sum_{i=1}^{n}
\left|
\int_{\mathbb{R}^n}-\frac{x_{i}-y_{i}}{2t}
\frac{\frac{1}{(4\pi t)^{n/2}}e^{-\frac{|x-y|^{2}}{4t}}g(y)}{u(x,t)}
\,dy
\right|^2
&& \text{\tssteelblue{Lemma~\ref{lem: derivatives of representation formula solution of heat equation} (a)
}} 
\\[1ex] \notag
&\le
\sum_{i=1}^{n}
\int_{\mathbb{R}^n}
\left(\frac{x_{i}-y_{i}}{2t}\right)^2
\frac{\frac{1}{(4\pi t)^{n/2}}e^{-\frac{|x-y|^{2}}{4t}}g(y)}{u(x,t)}
\,dy
&& \text{\tssteelblue{$\int_{\mathbb{R}^n}\frac{\frac{1}{(4\pi t)^{n/2}}e^{-\frac{|x-y|^{2}}{4t}}g(y)}{u(x,t)}\,dy=1$ $\implies$ Lemma\ref{lem: Holder's inequality => Jensen inequality}
}}
\\[1ex] \notag
&=
\int_{\mathbb{R}^n}
\sum_{i=1}^{n}
\left(\frac{x_{i}-y_{i}}{2t}\right)^2
\frac{\frac{1}{(4\pi t)^{n/2}}e^{-\frac{|x-y|^{2}}{4t}}g(y)}{u(x,t)}
\,dy
&& \text{\tssteelblue{$\int_{\mathbb{R}^n}\sum_{i=1}^{n}=\sum_{i=1}^{n}\int_{\mathbb{R}^n}$; $|x-y|^2:=\sum_{i=1}^{n} (x_{i}-y_{i})^2$
}} 
\\[1ex] \label{eqn: heat equation find (nabla u/u)^2 2nd derivative L-Y ineq}
&=
\int_{\mathbb{R}^n}
\frac{|x-y|^2}{4t^2}
\frac{\frac{1}{(4\pi t)^{n/2}}e^{-\frac{|x-y|^{2}}{4t}}g(y)}{u(x,t)}
\,dy
&& \text{\tssteelblue{
}}  
\end{align}

  \item 
  Calculate $\frac{\Delta u(x,t)}{u(x,t)}$
  
\begin{align}
\nonumber
&\frac{\Delta u(x,t)}{u(x,t)}
:=
\sum_{i=1}^{n} \frac{u_{x_i x_i}(x,t)}{u(x,t)}
&& \text{\tssteelblue{definition of $\Delta$
}} 
\\[1ex] \notag
&
=
\sum_{i=1}^{n}
\int_{\mathbb{R}^n}
\left(
-\frac{1}{2t}
+
\frac{(x_{i}-y_{i})^2}{4t^2}
\right)
\frac{\frac{1}{(4\pi t)^{n/2}}e^{-\frac{|x-y|^{2}}{4t}}g(y)}{u(x,t)}
\,dy
&& \text{\tssteelblue{Lemma~\ref{lem: derivatives of representation formula solution of heat equation} (b)
}} 
\\[1ex] \notag
&=
\int_{\mathbb{R}^n}
\sum_{i=1}^{n}
\left(
-\frac{1}{2t}
+
\frac{(x_{i}-y_{i})^2}{4t^2}
\right)
\frac{\frac{1}{(4\pi t)^{n/2}}e^{-\frac{|x-y|^{2}}{4t}}g(y)}{u(x,t)}
\,dy
&& \text{\tssteelblue{$\int_{\mathbb{R}^n}\sum_{i=1}^{n}=\sum_{i=1}^{n}\int_{\mathbb{R}^n}$
}} 
\\[1ex] \label{eqn: heat equation find (Delta u/u) 2nd derivative L-Y ineq}
&=
\int_{\mathbb{R}^n}
\left(
-\frac{n}{2t}
+
\frac{|x-y|^2}{4t^2}
\right)
\frac{\frac{1}{(4\pi t)^{n/2}}e^{-\frac{|x-y|^{2}}{4t}}g(y)}{u(x,t)}
\,dy
&& \text{\tssteelblue{$|x-y|^2:=\sum_{i=1}^{n} (x_{i}-y_{i})^2$
}} 
\end{align}

\item  
Estimate $\sum_{i,j=1,i\neq j}^{n} 
\frac{u_{x_i}(x,t)}{u(x,t)}\cdot\frac{u_{x_j}(x,t)}{u(x,t)}$

\begin{align}
\nonumber
&\sum_{i,j=1,i\neq j}^{n} 
\frac{u_{x_i}(x,t)}{u(x,t)}\cdot\frac{u_{x_j}(x,t)}{u(x,t)}
\le
\sum_{i,j=1,i\neq j}^{n} 
\frac{1}{2}
\left(
\left(
\frac{u_{x_i}(x,t)}{u(x,t)}
\right)^2
+
\left(
\frac{u_{x_j}(x,t)}{u(x,t)}
\right)^2
\right)
&& \text{\tssteelblue{$ab\le\frac{1}{2}(a^2+b^2)$
}} 
\\[1ex] \notag
&
=
\sum_{i,j=1,i\neq j}^{n}
\frac{1}{2}
\left[
\left(
\int_{\mathbb{R}^n}-\frac{x_{i}-y_{i}}{2t}
\frac{\frac{1}{(4\pi t)^{n/2}}e^{-\frac{|x-y|^{2}}{4t}}g(y)}{u(x,t)}
\,dy
\right)^2
+
\left(
\int_{\mathbb{R}^n}-\frac{x_{j}-y_{j}}{2t}
\frac{\frac{1}{(4\pi t)^{n/2}}e^{-\frac{|x-y|^{2}}{4t}}g(y)}{u(x,t)}
\,dy
\right)^2
\right]
&& \text{\tssteelblue{Lemma~\ref{lem: derivatives of representation formula solution of heat equation} (a)
}} 
\\[1ex] \notag
&\le
\sum_{i,j=1,i\neq j}^{n}
\frac{1}{2}
\left[
\int_{\mathbb{R}^n}
\left(\frac{x_{i}-y_{i}}{2t}\right)^2
\frac{\frac{1}{(4\pi t)^{n/2}}e^{-\frac{|x-y|^{2}}{4t}}g(y)}{u(x,t)}
\,dy
+
\int_{\mathbb{R}^n}
\left(\frac{x_{j}-y_{j}}{2t}\right)^2
\frac{\frac{1}{(4\pi t)^{n/2}}e^{-\frac{|x-y|^{2}}{4t}}g(y)}{u(x,t)}
\,dy
\right]
&& \text{\tssteelblue{Lemma~\ref{lem: Holder's inequality => Jensen inequality} 
}}
\\[1ex] \notag
&=
\int_{\mathbb{R}^n}
\sum_{i,j=1,i\neq j}^{n}
\frac{1}{2}
\frac{(x_{i}-y_{i})^2+(x_{j}-y_{j})^2}{4t^2}
\frac{\frac{1}{(4\pi t)^{n/2}}e^{-\frac{|x-y|^{2}}{4t}}g(y)}{u(x,t)}
\,dy
&& \text{\tssteelblue{$\int_{\mathbb{R}^n}\sum=\sum\int_{\mathbb{R}^n}$
}} 
\\[1ex] \notag
&=
\int_{\mathbb{R}^n}
\frac{(n-1)|x-y|^2}{4t^2}
\frac{\frac{1}{(4\pi t)^{n/2}}e^{-\frac{|x-y|^{2}}{4t}}g(y)}{u(x,t)}
\,dy,
&& \text{\tssteelblue{use \eqref{eqn: sum (xi-yi)^2+(xi-yi)^2=2(n-1)|x-y|^2}
}}  
\end{align} 
where we have used the observation
\begin{equation}\label{eqn: sum (xi-yi)^2+(xi-yi)^2=2(n-1)|x-y|^2}
\sum_{i,j=1,i\neq j}^{n}
(x_{i}-y_{i})^2+(x_{j}-y_{j})^2
=2(n-1)|x-y|^2.
\end{equation}

 \item  
 Estimate $\sum_{i,j=1,i\neq j}^{n}\frac{u_{x_i x_j}(x,t)}{u(x,t)}$

\begin{align}
\nonumber
&\sum_{i,j=1,i\neq j}^{n}
\frac{u_{x_i x_j}(x,t)}{u(x,t)}
=
\sum_{i,j=1,i\neq j}^{n}
\int_{\mathbb{R}^n}
\frac{(x_{i}-y_{i})(x_{j}-y_{j})}{4t^2}
\frac{\frac{1}{(4\pi t)^{n/2}}e^{-\frac{|x-y|^{2}}{4t}}g(y)}{u(x,t)}
\,dy
&& \text{\tssteelblue{Lemma~\ref{lem: derivatives of representation formula solution of heat equation} (c)
}} 
\\[1ex] \notag
&\le
\sum_{i,j=1,i\neq j}^{n}
\int_{\mathbb{R}^n}
\frac{1}{2}
\frac{(x_{i}-y_{i})^2+(x_{j}-y_{j})^2}{4t^2}
\frac{\frac{1}{(4\pi t)^{n/2}}e^{-\frac{|x-y|^{2}}{4t}}g(y)}{u(x,t)}
\,dy
&& \text{\tssteelblue{$ab\le\frac{1}{2}(a^2+b^2)$
}} 
\\[1ex] \notag
&=
\int_{\mathbb{R}^n}
\sum_{i,j=1,i\neq j}^{n}
\frac{1}{2}
\frac{(x_{i}-y_{i})^2+(x_{j}-y_{j})^2}{4t^2}
\frac{\frac{1}{(4\pi t)^{n/2}}e^{-\frac{|x-y|^{2}}{4t}}g(y)}{u(x,t)}
\,dy
&& \text{\tssteelblue{$\int_{\mathbb{R}^n}\sum=\sum\int_{\mathbb{R}^n}$
}} 
\\[1ex] \notag
&=
\int_{\mathbb{R}^n}
\frac{(n-1)|x-y|^2}{4t^2}
\frac{\frac{1}{(4\pi t)^{n/2}}e^{-\frac{|x-y|^{2}}{4t}}g(y)}{u(x,t)}
\,dy
&& \text{\tssteelblue{use \eqref{eqn: sum (xi-yi)^2+(xi-yi)^2=2(n-1)|x-y|^2}
}}  
\end{align}

  \item 
  Establish $\frac{\Delta u}{u}
-
\alpha
\sum_{i,j=1,i\neq j}^{n}
\frac{u_{x_i x_j}(x,t)}{u(x,t)}
-
\beta
\sum_{i,j=1,i\neq j}^{n} 
\frac{u_{x_i}(x,t)}{u(x,t)}\cdot\frac{u_{x_j}(x,t)}{u(x,t)}
-
\gamma
\frac{|\nabla u|^2}{u^2}
\ge-\frac{n}{2t}$

\begin{align}
\nonumber
&\frac{\Delta u}{u}
-
\alpha
\sum_{i,j=1,i\neq j}^{n}
\frac{u_{x_i x_j}(x,t)}{u(x,t)}
-
\beta
\sum_{i,j=1,i\neq j}^{n} 
\frac{u_{x_i}(x,t)}{u(x,t)}\cdot\frac{u_{x_j}(x,t)}{u(x,t)}
-
\gamma
\frac{|\nabla u|^2}{u^2}
&& \text{\tssteelblue{
}} 
\\[1ex] \notag
&\ge
\int_{\mathbb{R}^n}
\left(
-\frac{n}{2t}
+
\left(
1-
(n-1)
\left(
\alpha+\beta
\right)
-\gamma
\right)
\frac{|x-y|^2}{4t^2}
\right)
\frac{\frac{1}{(4\pi t)^{n/2}}e^{-\frac{|x-y|^{2}}{4t}}g(y)}{u(x,t)}
\,dy
&& \text{\tssteelblue{$1-
(n-1)
\left(
\alpha+\beta
\right)
-\gamma
\ge0$
}} 
\\[1ex] \notag
&\ge
\int_{\mathbb{R}^n}
-\frac{n}{2t}
\frac{\frac{1}{(4\pi t)^{n/2}}e^{-\frac{|x-y|^{2}}{4t}}g(y)}{u(x,t)}
\,dy
&& \text{\tssteelblue{$\int_{\mathbb{R}^n}\frac{\frac{1}{(4\pi t)^{n/2}}e^{-\frac{|x-y|^{2}}{4t}}g(y)}{u(x,t)}\,dy=1$
}} 
\\[1ex] \notag
&=
-\frac{n}{2t}
&& \text{\tssteelblue{
}} 
\end{align}

\end{enumerate}

\end{proof}


We generalize the Li-Yau inequality from second derivatives to fourth derivatives.

\begin{theorem}[\deeppink{Li-Yau type inequality for fourth derivatives}]\label{thm: Li-Yau Type Inequality for fourth derivatives n dim}
Let $u=u(x,t)$ be given by \eqref{eqn: representation formula for the heat equation Li-Yau}. Then
\begin{align}
\nonumber
&
\sum_{i=1}^{n}
\frac{u_{x_i x_i x_i x_i}}{u}
+
k_1
\sum_{i,j=1,i\neq j}^{n}
\frac{u_{x_i x_i x_j x_j}}{u}
+
k_2
\left|\frac{\nabla u}{u}\right|^4
+
k_3
\left|\frac{\Delta u}{u}\right|^2
+
k_4
\left(
\sum_{i,j=1,i\neq j}^{n} \frac{u_{x_i x_j}}{u}
\right)^2
&& \text{\tssteelblue{
}}  
\\[1ex] \notag
&\ge
\left(3 n+k_1+n^2k_3 -\frac{n \left(3+(n-1)k_1+nk_3\right){}^2}{1+n\left(k_2+k_3\right)}\right)
\frac{1}{4t^2},
&& \text{\tssteelblue{
}} 
\end{align} 
provided that $k_1$, $k_2$, $k_3$, and $k_4$ are constants satisfying
\begin{subequations}
\begin{eqnarray}
k_2+k_3& > & -\frac{1}{n}, \\[2mm]
k_1 & \ge & -nk_4, \\[2mm]    
k_2 & \le & 0, \\[2mm]
k_3 & \le  & 0, \\[2mm]
k_4 & \le & 0.
\end{eqnarray}
\end{subequations}


\end{theorem}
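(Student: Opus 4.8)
The plan is to repeat the strategy of Theorem~\ref{thm: Li-Yau Type Inequality for second derivatives n dim}: express every quotient on the left-hand side as an average against the probability measure
\[
d\mu:=\frac{(4\pi t)^{-n/2}e^{-|x-y|^2/4t}g(y)}{u(x,t)}\,dy,\qquad \int_{\mathbb{R}^n}d\mu=1,
\]
via Lemma~\ref{lem: derivatives of representation formula solution of heat equation}. Parts (e) and (f) turn the two genuine fourth-derivative sums into single integrals that are linear in $\mu$, while parts (a), (b), (c) supply the first- and second-derivative quantities appearing inside the three squared terms. I would introduce the single scalar
\[
\lambda:=\int_{\mathbb{R}^n}\frac{|x-y|^2}{2t}\,d\mu,
\]
so that $\frac{\Delta u}{u}=\frac{\lambda-n}{2t}$, and aim to bound $4t^2$ times the left-hand side from below by a quadratic in $\lambda$ alone.

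Since $k_2,k_3,k_4\le 0$, the three squared terms are handled by Jensen's inequality (Lemma~\ref{lem: Holder's inequality => Jensen inequality} with $p=2$), which in the form $\left(\int f\,d\mu\right)^2\le\int f^2\,d\mu$ bounds each squared average from above and hence, after multiplication by the nonpositive coefficient, from below. I would keep $\left|\frac{\Delta u}{u}\right|^2$ in closed form, since it already equals $\left(\frac{\lambda-n}{2t}\right)^2$; bound $\left|\frac{\nabla u}{u}\right|^4$ using $\left|\frac{\nabla u}{u}\right|^2=\sum_i\left(\int\frac{-(x_i-y_i)}{2t}\,d\mu\right)^2\le\int\frac{|x-y|^2}{4t^2}\,d\mu=\frac{\lambda}{2t}$; and control the off-diagonal Hessian square $\left(\sum_{i\neq j}\frac{u_{x_ix_j}}{u}\right)^2$ by Cauchy--Schwarz over the $n(n-1)$ ordered pairs, which ties it to the cross fourth-derivative sum of Lemma~\ref{lem: derivatives of representation formula solution of heat equation}(f).

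The crux of the reduction is the diagonal fourth moment produced by Lemma~\ref{lem: derivatives of representation formula solution of heat equation}(e): the power-mean bound $\sum_i(x_i-y_i)^4\ge\frac1n|x-y|^4$ followed by Jensen gives
\[
\int_{\mathbb{R}^n}\sum_{i=1}^n(x_i-y_i)^4\,d\mu\ge\frac1n\left(\int_{\mathbb{R}^n}|x-y|^2\,d\mu\right)^2,
\]
which is exactly what contributes the $\frac1n$ to the leading coefficient $\frac{C}{n}$ with $C=1+n(k_2+k_3)$. The off-diagonal fourth-order contributions, namely the $k_1$ term of Lemma~\ref{lem: derivatives of representation formula solution of heat equation}(f) and the $k_4$ term, are then grouped together; here the hypotheses $k_4\le 0$ and $k_1\ge -nk_4$ are precisely what I would invoke to make their combined coefficient nonnegative, so that these terms may be discarded from a lower bound. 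After collecting all contributions, $4t^2$ times the left-hand side is bounded below by
\[
Q(\lambda):=\frac{C}{n}\lambda^2-2B_0\lambda+A,\qquad B_0=3+(n-1)k_1+nk_3,\quad A=3n+k_1+n^2k_3,
\]
with all dependence on $k_4$ having cancelled.

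Finally, the hypothesis $k_2+k_3>-\frac1n$ ensures $C>0$, so $Q$ is an upward parabola; completing the square gives $\min_\lambda Q=A-\frac{nB_0^2}{C}$, attained at $\lambda=\frac{nB_0}{C}$, and dividing by $4t^2$ reproduces the asserted right-hand side. I expect the main obstacle to be the bookkeeping of the previous paragraph: verifying that, after Jensen, the power-mean bound and the Cauchy--Schwarz bound, the coefficients of $\lambda^2$ and $\lambda$ collapse to exactly $\frac{C}{n}$ and $-2B_0$ while the constant term is at least $A$, and in particular that the $k_4$ term and the off-diagonal part of the $k_1$ term cancel under $k_1\ge -nk_4$. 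This is the step that actually consumes the precise algebraic form of Lemma~\ref{lem: derivatives of representation formula solution of heat equation}(e),(f) together with all the sign and size constraints on the $k_i$.
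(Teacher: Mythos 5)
Your proposal is correct and follows essentially the same route as the paper: the same representation of every term as an average against the probability measure $d\mu$ via Lemma~\ref{lem: derivatives of representation formula solution of heat equation}, Jensen's inequality (Lemma~\ref{lem: Holder's inequality => Jensen inequality}) for the squared terms with $k_2,k_3,k_4\le 0$, the power-mean bound $\sum_i(x_i-y_i)^4\ge\tfrac1n|x-y|^4$ for the diagonal quartic, and the condition $k_1\ge-nk_4$ to discard the combined $\sum_{i\neq j}(x_i-y_i)^2(x_j-y_j)^2$ contribution. The only immaterial difference is the final step: the paper completes the square pointwise in $|x-y|^2$ inside the integrand $h$, while you apply one further Jensen step to reduce everything to the scalar moment $\lambda$ and minimize the resulting quadratic --- both give the identical constant $\tfrac{4AC-B^2}{4A}$.
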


\begin{proof}
\ \


\begin{enumerate}[(1)]
  \item  
  Estimate on $\sum_{i=1}^{n}\frac{u_{x_i x_i x_i x_i}(x,t)}{u(x,t)}$
\begin{align}
\nonumber
&\sum_{i=1}^{n}
\frac{u_{x_i x_i x_i x_i}(x,t)}{u(x,t)}
&& \text{\tssteelblue{Lemma~\ref{lem: derivatives of representation formula solution of heat equation} (e)
}}  
\\[1ex] \notag
&=
\sum_{i=1}^{n}
\int_{\mathbb{R}^n}
\frac{12 t^2-12 t (x_{i}-y_{i})^2+(x_{i}-y_{i})^4}{16 t^4}
\frac{\frac{1}{(4\pi t)^{n/2}}e^{-\frac{|x-y|^{2}}{4t}}g(y)}{u(x,t)}
\,dy
&& \text{\tssteelblue{
}} 
\\[1ex] \notag
&=
\int_{\mathbb{R}^n}
\sum_{i=1}^{n}
\frac{12 t^2-12 t (x_{i}-y_{i})^2+(x_{i}-y_{i})^4}{16 t^4}
\frac{\frac{1}{(4\pi t)^{n/2}}e^{-\frac{|x-y|^{2}}{4t}}g(y)}{u(x,t)}
\,dy
&& \text{\tssteelblue{$\sum\int=\int\sum$
}}
\\[1ex] \notag
&=
\int_{\mathbb{R}^n}
\sum_{i=1}^{n}
\frac{12 t^2-12 t (x_{i}-y_{i})^2+(x_{i}-y_{i})^4}{16 t^4}
\frac{\frac{1}{(4\pi t)^{n/2}}e^{-\frac{|x-y|^{2}}{4t}}g(y)}{u(x,t)}
\,dy
&& \text{\tssteelblue{$|x-y|^2:=\sum_{i=1}^{n} (x_{i}-y_{i})^2$; use \eqref{eqn: 4^1 4th derivative L-Y ineq}
}}
\\[1ex] \notag
&\ge
\int_{\mathbb{R}^n}
\frac{12 n t^2-12 t |x-y|^2+\frac{1}{n}|x-y|^4}{16 t^4}
\frac{\frac{1}{(4\pi t)^{n/2}}e^{-\frac{|x-y|^{2}}{4t}}g(y)}{u(x,t)}
\,dy
&& \text{\tssteelblue{
}}
\end{align}

\begin{align}
\nonumber
&\int_{\mathbb{R}^n}
\sum_{i=1}^{n}
\frac{(x_{i}-y_{i})^4}{16 t^4}
\frac{\frac{1}{(4\pi t)^{n/2}}e^{-\frac{|x-y|^{2}}{4t}}g(y)}{u(x,t)}
\,dy
&& \text{\tssteelblue{
}}  
\\[1ex] \notag
&=
\int_{\mathbb{R}^n}
\sum_{i=1}^{n}
\left(
\frac{x_{i}-y_{i}}{2t}
\right)^4
\frac{\frac{1}{(4\pi t)^{n/2}}e^{-\frac{|x-y|^{2}}{4t}}g(y)}{u(x,t)}
\,dy
&& \text{\tssteelblue{$\sum_{i=1}^{n} a_i^2\ge \frac{1}{n}\left(\sum_{i=1}^{n} a_i\right)^2$
}}
\\[1ex] \notag
&\ge
\int_{\mathbb{R}^n}
\frac{1}{n}
\left(
\sum_{i=1}^{n}
\left(
\frac{x_{i}-y_{i}}{2t}
\right)^2
\right)^2
\frac{\frac{1}{(4\pi t)^{n/2}}e^{-\frac{|x-y|^{2}}{4t}}g(y)}{u(x,t)}
\,dy
&& \text{\tssteelblue{$|x-y|^2:=\sum_{i=1}^{n} (x_{i}-y_{i})^2$
}}
\\[1ex] \notag
&=
\int_{\mathbb{R}^n}
\frac{1}{n}
\left(
\frac{|x-y|^2}{4t^2}
\right)^2
\frac{\frac{1}{(4\pi t)^{n/2}}e^{-\frac{|x-y|^{2}}{4t}}g(y)}{u(x,t)}
\,dy
&& \text{\tssteelblue{expand
}} 
\\[1ex] \label{eqn: 4^1 4th derivative L-Y ineq}
&=
\int_{\mathbb{R}^n}
\frac{1}{n}
\frac{|x-y|^4}{16t^4}
\frac{\frac{1}{(4\pi t)^{n/2}}e^{-\frac{|x-y|^{2}}{4t}}g(y)}{u(x,t)}
\,dy
&& \text{\tssteelblue{
}} 
\end{align}

  \item  
  Estimate on $\sum_{i,j=1,i\neq j}^{n}\frac{u_{x_i x_i x_j x_j}(x,t)}{u(x,t)}$
  
\begin{align}
\nonumber
&\sum_{i,j=1,i\neq j}^{n}
\frac{u_{x_i x_i x_j x_j}(x,t)}{u(x,t)}
&& \text{\tssteelblue{Lemma~\ref{lem: derivatives of representation formula solution of heat equation} (f)
}} 
\\[1ex] \notag
&=
\sum_{i,j=1,i\neq j}^{n}
\int_{\mathbb{R}^n}
\frac{4t^2-2t(x_{i}-y_{i})^2-2t(x_{j}-y_{j})^2+(x_{i}-y_{i})^2(x_{j}-y_{j})^2}{16t^4}
\frac{\frac{1}{(4\pi t)^{n/2}}e^{-\frac{|x-y|^{2}}{4t}}g(y)}{u(x,t)}
\,dy
&& \text{\tssteelblue{$\sum\int=\int\sum$
}} 
\\[1ex] \notag
&=
\int_{\mathbb{R}^n}
\sum_{i,j=1,i\neq j}^{n}
\frac{4t^2-2t(x_{i}-y_{i})^2-2t(x_{j}-y_{j})^2+(x_{i}-y_{i})^2(x_{j}-y_{j})^2}{16t^4}
\frac{\frac{1}{(4\pi t)^{n/2}}e^{-\frac{|x-y|^{2}}{4t}}g(y)}{u(x,t)}
\,dy
&& \text{\tssteelblue{$|x-y|^2:=\sum_{i=1}^{n} (x_{i}-y_{i})^2$
}} 
\\[1ex] \notag
&\deeppink{=}
\int_{\mathbb{R}^n}
\frac{4t^2
-\deeppink{4(n-1)}t|x-y|^2
+
\sum_{i,j=1,i\neq j}^{n}(x_{i}-y_{i})^2(x_{j}-y_{j})^2}{16t^4}
\frac{\frac{1}{(4\pi t)^{n/2}}e^{-\frac{|x-y|^{2}}{4t}}g(y)}{u(x,t)}
\,dy,
&& \text{\tssteelblue{
}} 
\end{align}   
where we have used \eqref{eqn: sum (xi-yi)^2+(xi-yi)^2=2(n-1)|x-y|^2}.  
  \item 
  Estimate on $\left|\frac{\nabla u(x,t)}{u(x,t)}\right|^4$

\begin{align}
\nonumber
&\left|\frac{\nabla u(x,t)}{u(x,t)}\right|^4
:=
\left(
\sum_{i=1}^{n} \left|\frac{u_{x_i}(x,t)}{u(x,t)}\right|^2
\right)^2
&& \text{\tssteelblue{definition of $\nabla$
}} 
\\[1ex] \notag
&
=
\left(
\sum_{i=1}^{n}
\left|
\int_{\mathbb{R}^n}-\frac{x_{i}-y_{i}}{2t}
\frac{\frac{1}{(4\pi t)^{n/2}}e^{-\frac{|x-y|^{2}}{4t}}g(y)}{u(x,t)}
\,dy
\right|^2
\right)^2
&& \text{\tssteelblue{Lemma~\ref{lem: derivatives of representation formula solution of heat equation} (a)
}} 
\\[1ex] \notag
&\le
\left(
\sum_{i=1}^{n}
\int_{\mathbb{R}^n}
\left(\frac{x_{i}-y_{i}}{2t}\right)^2
\frac{\frac{1}{(4\pi t)^{n/2}}e^{-\frac{|x-y|^{2}}{4t}}g(y)}{u(x,t)}
\,dy
\right)^2
&& \text{\tssteelblue{$\int_{\mathbb{R}^n}\frac{\frac{1}{(4\pi t)^{n/2}}e^{-\frac{|x-y|^{2}}{4t}}g(y)}{u(x,t)}\,dy=1$ $\implies$ Jensen ineq.
}}
\\[1ex] \notag
&=
\left(
\int_{\mathbb{R}^n}
\sum_{i=1}^{n}
\left(\frac{x_{i}-y_{i}}{2t}\right)^2
\frac{\frac{1}{(4\pi t)^{n/2}}e^{-\frac{|x-y|^{2}}{4t}}g(y)}{u(x,t)}
\,dy
\right)^2
&& \text{\tssteelblue{$\int_{\mathbb{R}^n}\sum_{i=1}^{n}=\sum_{i=1}^{n}\int_{\mathbb{R}^n}$; $|x-y|^2:=\sum_{i=1}^{n} (x_{i}-y_{i})^2$
}} 
\\[1ex] \notag
&=
\left(
\int_{\mathbb{R}^n}
\frac{|x-y|^2}{4t^2}
\frac{\frac{1}{(4\pi t)^{n/2}}e^{-\frac{|x-y|^{2}}{4t}}g(y)}{u(x,t)}
\,dy
\right)^2
&& \text{\tssteelblue{$\int_{\mathbb{R}^n}\frac{\frac{1}{(4\pi t)^{n/2}}e^{-\frac{|x-y|^{2}}{4t}}g(y)}{u(x,t)}\,dy=1$ $\implies$ Jensen ineq.
}} 
\\[1ex] \notag
&\le
\int_{\mathbb{R}^n}
\frac{\left|x-y\right|^4}{16t^4}
\frac{\frac{1}{(4\pi t)^{n/2}}e^{-\frac{|x-y|^{2}}{4t}}g(y)}{u(x,t)}
\,dy
&& \text{\tssteelblue{
}} 
\end{align}  
  
  \item 
  Estimate on $\left|\frac{\Delta u(x,t)}{u(x,t)}\right|^2$
  
\begin{align}
\nonumber
&\left|\frac{\Delta u(x,t)}{u(x,t)}\right|^2
:=
\left(
\sum_{i=1}^{n} \frac{u_{x_i x_i}(x,t)}{u(x,t)}
\right)^2
&& \text{\tssteelblue{definition of $\Delta$
}} 
\\[1ex] \notag
&
=
\left(
\sum_{i=1}^{n}
\int_{\mathbb{R}^n}
\left(
-\frac{1}{2t}
+
\frac{(x_{i}-y_{i})^2}{4t^2}
\right)
\frac{\frac{1}{(4\pi t)^{n/2}}e^{-\frac{|x-y|^{2}}{4t}}g(y)}{u(x,t)}
\,dy
\right)^2
&& \text{\tssteelblue{Lemma~\ref{lem: derivatives of representation formula solution of heat equation} (b)
}} 
\\[1ex] \notag
&=
\left(
\int_{\mathbb{R}^n}
\sum_{i=1}^{n}
\left(
-\frac{1}{2t}
+
\frac{(x_{i}-y_{i})^2}{4t^2}
\right)
\frac{\frac{1}{(4\pi t)^{n/2}}e^{-\frac{|x-y|^{2}}{4t}}g(y)}{u(x,t)}
\,dy
\right)^2
&& \text{\tssteelblue{$\int_{\mathbb{R}^n}\sum_{i=1}^{n}=\sum_{i=1}^{n}\int_{\mathbb{R}^n}$; $|x-y|^2:=\sum_{i=1}^{n} (x_{i}-y_{i})^2$
}} 
\\[1ex] \notag
&=
\left(
\int_{\mathbb{R}^n}
\left(
-\frac{n}{2t}
+
\frac{|x-y|^2}{4t^2}
\right)
\frac{\frac{1}{(4\pi t)^{n/2}}e^{-\frac{|x-y|^{2}}{4t}}g(y)}{u(x,t)}
\,dy
\right)^2
&& \text{\tssteelblue{$\int_{\mathbb{R}^n}\frac{\frac{1}{(4\pi t)^{n/2}}e^{-\frac{|x-y|^{2}}{4t}}g(y)}{u(x,t)}\,dy=1$ $\implies$ Jensen ineq.
}} 
\\[1ex] \notag
&\le
\int_{\mathbb{R}^n}
\frac{\left(|x-y|^2-2nt\right)^2}{16t^4}
\frac{\frac{1}{(4\pi t)^{n/2}}e^{-\frac{|x-y|^{2}}{4t}}g(y)}{u(x,t)}
\,dy
&& \text{\tssteelblue{
}} 
\end{align}  
  
  \item 
  Estimate on $\left(
\sum_{i,j=1,i\neq j}^{n} \frac{u_{x_i x_j}(x,t)}{u(x,t)}
\right)^2$
\begin{align}
\nonumber
&
\sum_{i,j=1,i\neq j}^{n}
\left( 
\frac{u_{x_i x_j}(x,t)}{u(x,t)}
\right)^2
&& \text{\tssteelblue{
}} 
\\[1ex] \notag
&
=
\sum_{i,j=1,i\neq j}^{n}
\left(
\int_{\mathbb{R}^n}
\frac{(x_{i}-y_{i})(x_{j}-y_{j})}{4t^2}
\frac{\frac{1}{(4\pi t)^{n/2}}e^{-\frac{|x-y|^{2}}{4t}}g(y)}{u(x,t)}
\,dy
\right)^2
&& \text{\tssteelblue{Lemma~\ref{lem: derivatives of representation formula solution of heat equation} (c)
}} 
\\[1ex] \notag
&\le
\left(
\sum_{i,j=1,i\neq j}^{n}
\int_{\mathbb{R}^n}
\frac{|x_{i}-y_{i}||x_{j}-y_{j}|}{4t^2}
\frac{\frac{1}{(4\pi t)^{n/2}}e^{-\frac{|x-y|^{2}}{4t}}g(y)}{u(x,t)}
\,dy
\right)^2
&& \text{\tssteelblue{$\sum_{i}  a_i^2
=
\sum_{i} |a_i|^2\le
\left(\sum_{i} |a_i|
\right)^2$   
}} 
\\[1ex] \notag
&=
\left(
\int_{\mathbb{R}^n}
\sum_{i,j=1,i\neq j}^{n}
\frac{|x_{i}-y_{i}||x_{j}-y_{j}|}{4t^2}
\frac{\frac{1}{(4\pi t)^{n/2}}e^{-\frac{|x-y|^{2}}{4t}}g(y)}{u(x,t)}
\,dy
\right)^2
&& \text{\tssteelblue{$\int_{\mathbb{R}^n}\sum_{i,j=1,i\neq j}^{n}=\sum_{i,j=1,i\neq j}^{n}\int_{\mathbb{R}^n}$
}}
\\[1ex] \notag
&\le
\int_{\mathbb{R}^n}
\left(
\sum_{i,j=1,i\neq j}^{n}
\frac{|x_{i}-y_{i}||x_{j}-y_{j}|}{4t^2}
\right)^2
\frac{\frac{1}{(4\pi t)^{n/2}}e^{-\frac{|x-y|^{2}}{4t}}g(y)}{u(x,t)}
\,dy
&& \text{\tssteelblue{$\int_{\mathbb{R}^n}\frac{\frac{1}{(4\pi t)^{n/2}}e^{-\frac{|x-y|^{2}}{4t}}g(y)}{u(x,t)}\,dy=1$ $\implies$ Jensen ineq.
}}
\\[1ex] \notag
&\le
\int_{\mathbb{R}^n}
\deeppink{n}
\sum_{i,j=1,i\neq j}^{n}
\frac{(x_{i}-y_{i})^2(x_{j}-y_{j})^2}{16t^4}
\frac{\frac{1}{(4\pi t)^{n/2}}e^{-\frac{|x-y|^{2}}{4t}}g(y)}{u(x,t)}
\,dy
&& \text{\tssteelblue{$\left(\sum_{i=1}^{n} a_i\right)^2 \leq n\sum_{i=1}^{n} a_i^2$}
}
\end{align}

\end{enumerate}

Combining the above calculations, we have

\begin{align}
\nonumber
&
\sum_{i=1}^{n}
\frac{u_{x_i x_i x_i x_i}}{u}
+
k_1
\sum_{i,j=1,i\neq j}^{n}
\frac{u_{x_i x_i x_j x_j}}{u}
+
k_2
\left|\frac{\nabla u}{u}\right|^4
+
k_3
\left|\frac{\Delta u}{u}\right|^2
+
k_4
\left(
\sum_{i,j=1,i\neq j}^{n} \frac{u_{x_i x_j}}{u}
\right)^2
&& \text{\tssteelblue{
}}  
\\[1ex] \notag
&\ge
\int_{\mathbb{R}^n}
\frac{h(t,x,y,n)}{16 t^4}
\frac{\frac{1}{(4\pi t)^{n/2}}e^{-\frac{|x-y|^{2}}{4t}}g(y)}{u(x,t)}
\,dy,
&& \text{\tssteelblue{
}} 
\end{align}  
where 
\begin{align}
\nonumber
h(t,x,y,n)
&=12 n t^2-12 t |x-y|^2+\frac{1}{n}|x-y|^4
+
k_1
\left(
\darkmagenta{4t^2}
-\deeppink{4(n-1)}t|x-y|^2
+
\sum_{i,j=1,i\neq j}^{n}(x_{i}-y_{i})^2(x_{j}-y_{j})^2
\right)
&& \text{\tssteelblue{
}}  
\\[1ex] \notag
&\hspace{4mm}+
k_2
\left|x-y\right|^4
+
k_3
\left(|x-y|^2-2nt\right)^2
+
k_4
\left(
n\sum_{i,j=1,i\neq j}^{n}(x_{i}-y_{i})^2(x_{j}-y_{j})^2
\right)
&& \text{\tssteelblue{
}} 
\\[1ex] \notag
&=
\left(
12n+4k_1+4n^2k_3
\right)t^2
+
\left(
-12-\deeppink{4(n-1)}k_1-4nk_3
\right)
t\left|x-y\right|^2
+
\left(
\frac{1}{n}+k_2+k_3
\right)|x-y|^4
&& \text{\tssteelblue{
}}
\\[1ex] \notag
&\hspace{4mm}+
\left(
k_1+nk_4
\right)
\sum_{i,j=1,i\neq j}^{n}(x_{i}-y_{i})^2(x_{j}-y_{j})^2
&& \text{\tssteelblue{
}}
\\[1ex] \notag
&=Ct^2+Bt|x-y|^2+A|x-y|^4
+
\left(
k_1+nk_4
\right)
\sum_{i,j=1,i\neq j}^{n}(x_{i}-y_{i})^2(x_{j}-y_{j})^2
&& \text{\tssteelblue{
}}
\\[1ex] \notag
&=
A\left(|x-y|^2+\frac{B}{2A}t\right)^2
+
\left(\frac{4AC-B^2}{4A}\right)t^2
+
\left(
k_1+nk_4
\right)
\sum_{i,j=1,i\neq j}^{n}(x_{i}-y_{i})^2(x_{j}-y_{j})^2,
&& \text{\tssteelblue{
}}
\end{align}  
where 
\begin{subequations}
\begin{eqnarray}
A &=& \frac{1}{n}+k_2+k_3, \\[2mm]
B &=& -12-\deeppink{4(n-1)}k_1-4nk_3, \\[2mm]    
C &=& 12n+4k_1+4n^2k_3, \\[2mm]
\frac{4AC-B^2}{4A} &=& 4 \left(3 n+k_1+n^2k_3 -\frac{n \left(3+(n-1)k_1+nk_3\right){}^2}{1+n\left(k_2+k_3\right)}\right).
\end{eqnarray}
\end{subequations}

\end{proof}

\bibliographystyle{amsplain}

\end{document}